\newtheorem{thm}{Theorem}
\newtheorem{lem}[thm]{Lemma}
\newtheorem{cor}[thm]{Corollary}
\numberwithin{thm}{section}
\numberwithin{equation}{section}
\theoremstyle{definition}
\newcommand{\rat}{\mathbb Q}
\newcommand{\alg}{\overline\rat}
\newcommand{\intg}{\mathbb Z}
\newcommand{\mult}{\lambda}
\newcommand{\ord}{\mult}
\title[Weil height and auxiliary polynomials]{The Weil height in terms of
an auxiliary polynomial}
\author[C.L. Samuels]{Charles L. Samuels}
\address{Department of Mathematics, University of Texas at Austin, 1 University Station C1200
  Austin, TX 78712}
\email{csamuels@math.utexas.edu}
\subjclass[2000]{Primary 11R04, 11R09}
\keywords{Weil height, Mahler measure, Lehmer's problem}
\begin{document}

\begin{abstract}
  Recent theorems of Dubickas and Mossinghoff use auxiliary polynomials to give lower bounds on the
  Weil height of an algebraic number $\alpha$ under certain assumptions on $\alpha$.  We prove a
  theorem which introduces an auxiliary polynomial for giving lower bounds on the height of any algebraic
  number.  Our theorem contains, as corollaries, a slight generalization of the above results as well as 
  some new lower bounds in other special cases.
\end{abstract}

\maketitle

\section{Introduction}

Let $K$ be a number field and $v$ a place of $K$ dividing the place $p$ of $\rat$.  Let $K_v$ and $\rat_p$
denote the respective completions.  We write $\|\cdot\|_v$ to denote the unique absolute value on $K_v$ 
extending the $p$-adic absolute value on $\rat_p$ and let  $|\cdot|_v=\|\cdot\|_v^{[K_v:\rat_p]/[K:\rat]}$.
Define the logarithmic {\it Weil height} of $\alpha\in K$ by
\begin{equation*} \label{WeilHeightDef}
  h(\alpha) = \sum_v\log^+|\alpha|_v
\end{equation*}
where the sum is taken over all places $v$ of $K$.  By the way we have normalized our absolute values,
this definition does not depend on $K$, and therefore, $h$ is a well-defined function on $\alg$.
By Kronecker's Theorem, $h(\alpha)\geq 0$ with equality precisely when $\alpha$ is zero or a root of unity.

For $f\in\intg[x]$ having roots $\alpha_1,\ldots,\alpha_d$ define the logarithmic {\it Mahler measure} of $f$ by 
\begin{equation*} \label{MahlerMeasureDef}
  \mu(f) = \sum_{k=1}^d h(\alpha_k).
\end{equation*}
It is also worth noting that if $f$ is irreducible then $\mu(f) = \deg\alpha\cdot h(\alpha)$.

Certainly $\mu(f)\geq 0$ with equality precisely when the only roots of $f$ are $0$ and roots of unity.  
In 1933, D.H. Lehmer \cite{Lehmer} asked if there is a constant $c>0$ such that $\mu(f)\geq c$ 
in all other cases.  He noted that 
\begin{equation*}
  \mu(x^{10}+x^9-x^7-x^6-x^5-x^4-x^3+x+1) = .1623\ldots
\end{equation*}
and this remains the smallest known Mahler measure greater than $0$.
The best known unconditional result toward answering Lehmer's problem is a theorem of Dobrowolski 
\cite{Dobrowolski} where he proves that if $f$ has positive Mahler measure then
\begin{equation*}
  \mu(f) \gg \left(\frac{\log\log \deg f}{\log \deg f}\right)^3.
\end{equation*}

An affirmative answer to Lehmer's problem has been given in certain special cases.
A polynomial $f$ is said to be reciprocal if whenever $\alpha$ is a root of $f$ then $\alpha^{-1}$ is
also a root.  Breusch \cite{Breusch} proved that there exists a positive constant $c$ such that
if $f$ is not reciprocal then $\mu(f)\geq c$.
Smyth \cite{Smyth} later showed that we may take $c = \mu(x^3-x+1)$.  
Borwein, Hare and Mossinghoff \cite{BHM} improved the constant
found by Smyth in the special case that $f$ has odd coefficients.  They showed that
if $f$ is a non-reciprocal polynomial over $\intg$ having odd coefficients, then 
$\mu(f) \geq \mu(x^2-x-1)$.

Borwein, Dobrowolski and Mossinghoff \cite{BDM} relaxed the assumption that $f$ not be reciprocal and 
still obtained an absolute lower bound on $\mu(f)$.  They used properties of the resultant to prove
that if $f$ has no cyclotomic factors and coefficients congruent to $1\mod m$ then
\begin{equation*} \label{BDMInequality}
  \mu(f) \geq c_m\cdot\frac{\deg f}{1+\deg f}
\end{equation*}
where $c_2=(\log 5)/4$ and $c_m = \log(\sqrt{m^2+1}/2)$ for all $m>2$.  These results appear in \cite{BDM} 
as Corollaries 3.4 and 3.5 to Theorem 3.3.  This theorem gives a lower bound of the form
\begin{equation} \label{BDMMain}
  \mu(f) \geq c_m(T)\cdot\frac{\deg f}{1+\deg f}
\end{equation}
where $f$ has no cyclotomic factors and coefficients congruent to $1\mod m$.  Here, $c_m(T)$ is a positive 
constant depending on  both $m$ and an auxiliary polynomial $T\in\intg[x]$.  The corollaries follow by making an 
appropriate choice of $T$.

Extending the techniques of \cite{BDM}, Dubickas and Mossinghoff \cite{DubMoss} improved inequality 
\eqref{BDMMain} by finding a lower bound of the form
\begin{equation} \label{DubMossMain}
  \mu(g) \geq b_m(T)\cdot\frac{\deg g}{1+\deg f}
\end{equation}
where $b_m(T)\geq c_m(T)$.  Here, $g$ has no cyclotomic factors and is a factor of a polynomial $f$ having 
coefficients congruent to $1\mod m$.  
Moreover, they produced an algorithm which generates a sequence of polynomials $\{T_k\}$ 
such that the sequence $\{b_m(T_k)\}$ is increasing and $b_m(T_k) > c_m$ for sufficiently large $k$.

In a slightly different direction, Schinzel \cite{Schinzel} proved that if $\alpha$ is a totally real 
algebraic integer, not $0$ or $\pm 1$, then $h(\alpha) \geq \frac{1}{2}\log\frac{1+\sqrt 5}{2}$.
Bombieri and Zannier \cite{BomZan} proved that
if $\alpha$ is a totally $p$-adic algebraic number, not $0$ or a root of unity then
$h(\alpha) \geq \frac{\log p}{2(p+1)}$.

If, in addition, $\alpha$ is an algebraic unit, Petsche \cite{Petsche} gave the improved lower bound
\begin{equation} \label{ClayBound}
  h(\alpha) \geq \frac{c_p}{p-1}
\end{equation}
where $c_2 = \log(\sqrt 2)$ and $c_p = \log (p/2)$ for all primes $p>2$.  Dubickas and Mossinghoff
\cite{DubMoss} introduced an auxiliary polynomial to this problem as well, giving the lower bound
\begin{equation} \label{DubMossMain2}
  h(\alpha) \geq \frac{b_p(T)}{p-1}
\end{equation} 
where $b_p(T)$ is the same as in \eqref{DubMossMain}.  They showed how to find a sequence 
of auxiliary polynomials that further improved \eqref{ClayBound}.

As we have remarked, the well-known lower bounds \eqref{BDMMain}, \eqref{DubMossMain} and \eqref{DubMossMain2} 
all rely on an auxiliary polynomial $T$.  However, each of these bounds requires an assumption on $\alpha$.
Our main result, Theorem \ref{GlobalBounds}, shows that if $\alpha\in\alg$ then $h(\alpha)$ equals a function
depending on an auxiliary polynomial.  In section \ref{x^n-1}, we show that this theorem naturally 
contains the results of \cite{DubMoss}.  Finally, in sections \ref{(x^n-1)^r} and \ref{LowSup} we deduce 2
other interesting consequences to our main result.

\section{Main Results} \label{AuxiliaryPoly}

Let $\Omega_v$ be the completion of an algebraic closure of $K_v$. 
We define the logarithmic {\it local supremum norm} of $T\in\Omega_v[x]$ on the unit circle by
\begin{equation*} \label{LocalSupNormDef}
   \nu_v(T) = \log \sup\{|T(z)|_v:z\in\Omega_v\ \mathrm{and}\ |z|_v = 1\}.
\end{equation*}
For $\alpha\in\Omega_v$ and $N\in\intg$ such that $\deg T\leq  N$ define
\begin{equation*}
  U_v(N,\alpha,T) = \inf\{\nu_v(T-f): f\in\Omega_v[x], f(\alpha) = 0\ \mathrm{and}\ \deg f\leq N\}.
\end{equation*}
We now obtain the following lemma which relates $U_v(N,\alpha,T)$ to more familiar functions.

\begin{lem} \label{LocalBounds}
  Let $N\in\intg$ and $\alpha\in\Omega_v$.  If $T\in\Omega_v[x]$ is such that $\deg T\leq N$ then
  \begin{align} \label{LocalBoundsEquations}
    U_v(N,\alpha,T) & = \log|T(\alpha)|_v + U_v(N,\alpha,1) \nonumber \\
    & =  \log|T(\alpha)|_v - N\log^+|\alpha|_v.
  \end{align}
\end{lem}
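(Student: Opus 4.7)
The plan is to prove the second equality, $U_v(N,\alpha,T) = \log|T(\alpha)|_v - N\log^+|\alpha|_v$, and then deduce the first equality by specializing to $T = 1$. The argument splits into a lower bound that holds for every admissible $f$, and a matching explicit construction of an $f$ achieving it.

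For the lower bound, I would first establish the following fundamental inequality: for any $P\in\Omega_v[x]$ of degree at most $N$,
\[
   |P(\alpha)|_v \leq e^{\nu_v(P)}\cdot\max(1,|\alpha|_v)^N.
\]
When $v$ is non-archimedean, $e^{\nu_v(P)}$ is the Gauss norm $\max_i|a_i|_v$, and the ultrametric inequality gives the bound term by term. When $v$ is archimedean, the case $|\alpha|_v\leq 1$ is the maximum modulus principle applied to $P$ on the closed unit disk. The case $|\alpha|_v>1$ is handled by applying the maximum modulus principle to the reciprocal polynomial $\tilde P(w) = w^N P(1/w)$ at the point $w = 1/\alpha$; since $\tilde P$ has the same supremum norm as $P$ on the unit circle, this yields $|P(\alpha)|_v\leq e^{\nu_v(P)}|\alpha|_v^N$. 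Applying the fundamental inequality to $P = T-f$, where $f(\alpha) = 0$ forces $P(\alpha) = T(\alpha)$, gives $\nu_v(T-f)\geq \log|T(\alpha)|_v - N\log^+|\alpha|_v$; taking the infimum over admissible $f$ proves one direction.

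For the matching upper bound I would exhibit a specific $f$ in each case. If $|\alpha|_v\leq 1$, polynomial division gives $T(x) = T(\alpha) + (x-\alpha)q(x)$ with $\deg q\leq N-1$; then $f(x) = (x-\alpha)q(x)$ satisfies $f(\alpha) = 0$, $\deg f\leq N$, and $T-f$ is the constant $T(\alpha)$, so $\nu_v(T-f) = \log|T(\alpha)|_v$, matching the target since $\log^+|\alpha|_v = 0$. If $|\alpha|_v>1$, take $f(x) = T(x) - (T(\alpha)/\alpha^N)x^N$; then $f(\alpha) = 0$, $\deg f\leq N$, and $T-f = (T(\alpha)/\alpha^N)x^N$ is a single monomial whose supremum norm on the unit circle is exactly $|T(\alpha)|_v/|\alpha|_v^N$, giving $\nu_v(T-f) = \log|T(\alpha)|_v - N\log|\alpha|_v$ as required.

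The main technical obstacle is the archimedean half of the fundamental inequality, since one has to invoke the maximum modulus principle and the reciprocal-polynomial trick to cover both $|\alpha|_v\leq 1$ and $|\alpha|_v>1$; the non-archimedean case and the explicit constructions are essentially formal. Finally, the first equality in the statement is immediate from the second: applying the formula with $T = 1$ yields $U_v(N,\alpha,1) = -N\log^+|\alpha|_v$, and subtracting the two identities gives $U_v(N,\alpha,T) - U_v(N,\alpha,1) = \log|T(\alpha)|_v$.
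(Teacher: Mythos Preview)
Your proof is correct and uses essentially the same ingredients as the paper: the fundamental inequality via the Gauss norm (non-archimedean) and the maximum principle plus the reciprocal-polynomial trick (archimedean), together with the explicit choices $f(x)=T(x)-T(\alpha)$ and $f(x)=T(x)-(T(\alpha)/\alpha^N)x^N$ to realize the infimum. The only difference is organizational: the paper first establishes the left-hand identity $U_v(N,\alpha,T)=\log|T(\alpha)|_v+U_v(N,\alpha,1)$ by the change of variables $f\mapsto f+T(x)-T(\alpha)$ in the infimum and then computes $U_v(N,\alpha,1)$, whereas you compute $U_v(N,\alpha,T)$ directly for general $T$ and specialize afterward; neither route offers any real advantage over the other.
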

\begin{proof}
If $T(\alpha) = 0$ then all parts of equations \eqref{LocalBoundsEquations} equal $-\infty$, so we assume that
$T(\alpha)\ne 0$. Let us first verify the left hand equation.  For simplicity define the set
$$S_v(\alpha,N) = \{f\in\Omega_v[x]:f(\alpha) = 0\ \mathrm{and}\ \deg f\leq N\}.$$  It is clear that
\begin{align*}
  U_v(N,\alpha,T) & = \inf\{\nu_v(T(x)-f(x)):f\in S_v(\alpha,N)\} \\
  & = \inf\{\nu_v(T(x) - (T(x) - T(\alpha) +f(x))):f\in S_v(\alpha,N)\} \\
  & = \inf\{\nu_v(T(\alpha) - f(x)):f\in S_v(\alpha,N)\} \\
  & = \inf\{\nu_v(T(\alpha)(1 - f(x))):f\in S_v(\alpha,N)\}.
\end{align*}
Since $\nu_v$ is the logarithm of a norm, we may factor $T(\alpha)$ out of the infimum to see that
\begin{align*}
   U_v(N,\alpha,T) & = \log|T(\alpha)|_v +  \inf\{\nu_v(1 - f(x)):f\in S_v(\alpha,N)\} \\
   & = \log|T(\alpha)|_v + U_v(N,\alpha,1)
\end{align*}
which establishes the left hand equality.

In order to establish the right hand equality we must show that $U_v(N,\alpha,1) = -N\log^+|\alpha|_v$.
We first claim that if $N\in\intg$ then
\begin{equation} \label{MaxPrinciple}
  \log|F(\alpha)|_v \leq \nu_v(F) + N\log^+|\alpha|_v
\end{equation}
for all $F\in\Omega_v[x]$ with $\deg F\leq N$.  To see this, write $F(x) = \sum_{k=0}^{\deg F}a_kx^k$.
If $v$ is non-Archimedean then we have that
\begin{equation} \label{NonArchSup}
  \nu_v(F) =  \log \max\{|a_k|_v:0\leq k \leq \deg F\}
\end{equation}
and \eqref{MaxPrinciple} follows from the strong triangle inequality.  We now assume that $v$ is Archimedean.
If $|\alpha|_v \leq 1$ then the inequality follows from the maximum principle.  
If $|\alpha|_v > 1$ then we obtain that
\begin{equation*}
  \log|\alpha^{-\deg F}F(\alpha)|_v \leq \nu_v(x^{\deg F}F(x^{-1})) = \nu_v(F)
\end{equation*}
and \eqref{MaxPrinciple} follows.

Now suppose that $f\in S_v(\alpha,N)$.  Therefore, $\deg(1-f)\leq N$
and inequality \eqref{MaxPrinciple} implies that
\begin{equation*}
  0 = \log|1-f(\alpha)|_v \leq \nu_v(1-f) + N\log^+|\alpha|_v.
\end{equation*}
This inequality holds for all  polynomials $f\in S_v(\alpha,N)$ so that
the right hand side may be replaced by its infimum over all such $f$.  That is, we obtain
$0 \leq  U_v(N,\alpha,1) + N\log^+|\alpha|_v$ so we find that
\begin{equation}\label{halfway}
  U_v(N,\alpha,1) \geq -N\log^+|\alpha|_v.
\end{equation}

We will now establish the opposite direction of \eqref{halfway} by making specific choices for $f$
to give upper bounds on $U_v(N,\alpha,1)$.  By taking $f\equiv 0$ we see easily that
$U_v(N,\alpha,1) \leq 0$.  Similarly, by taking $f(x) = 1 - (x/\alpha)^N$ we obtain
\begin{equation*}
  U_v(N,\alpha,1) \leq \nu_v(x/\alpha)^N = -N\log|\alpha|_v.
\end{equation*}
Hence
\begin{equation}
  U_v(N,\alpha,1) \leq \min\{0,-N\log|\alpha|_v\} = -N\log^+|\alpha|_v.
\end{equation}
\end{proof}

If $\alpha\in K$ and $T\in K[x]$ are such that $T(\alpha)\ne 0$ then Lemma \ref{LocalBounds} implies that
$U_v(N,\alpha,T)=0$ for all but finitely many places $v$ of $K$.  Hence, in this situation we may define
\begin{equation*} \label{UDef}
  U(N,\alpha,T) = \sum_vU_v(N,\alpha,T)
\end{equation*}
where $v$ runs over the places of $K$.  We note that this definition does not depend on $K$ so that $U$
is a well-defined function on $\{(\alpha,T)\in\alg\times\alg[x]: T(\alpha)\ne 0\}$.  We are
now prepared to state and prove our main result.

\begin{thm} \label{GlobalBounds}
  Let $N\in\intg$ and $\alpha\in\alg$.  If $T\in\alg[x]$ is such that $\deg T\leq N$ and 
  $T(\alpha) \ne 0$ then
  \begin{equation*}
    U(N,\alpha,T) = U(N,\alpha,1) = -Nh(\alpha).
  \end{equation*}
\end{thm}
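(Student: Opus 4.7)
The plan is to derive the global statement by summing the local identities of Lemma \ref{LocalBounds} over all places of a number field containing the relevant data, and then invoking the product formula to eliminate the dependence on $T$.

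First I would pick a number field $K$ containing $\alpha$ together with all coefficients of $T$, so that $T(\alpha)\in K^{\times}$ and all quantities $U_v(N,\alpha,T)$ and $U_v(N,\alpha,1)$ make sense at every place $v$ of $K$. Lemma \ref{LocalBounds} applies at each such $v$ (note that $\deg 1 = 0\leq N$ and $1(\alpha)=1\ne 0$, so the lemma is valid for $T$ replaced by the constant polynomial $1$) and yields
\begin{equation*}
  U_v(N,\alpha,T) \;=\; \log|T(\alpha)|_v + U_v(N,\alpha,1) \;=\; \log|T(\alpha)|_v - N\log^+|\alpha|_v.
\end{equation*}

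Next I would sum these identities over all places $v$ of $K$. The remark preceding the theorem ensures that $U_v(N,\alpha,T)=0$ for all but finitely many $v$, so both $U(N,\alpha,T)$ and $U(N,\alpha,1)$ are well-defined. Summing the left equality gives
\begin{equation*}
  U(N,\alpha,T) \;=\; \sum_v \log|T(\alpha)|_v + U(N,\alpha,1),
\end{equation*}
and since $T(\alpha)\in K^{\times}$, the product formula forces $\sum_v\log|T(\alpha)|_v = 0$. This establishes the first equality $U(N,\alpha,T)=U(N,\alpha,1)$. Summing the right equality with $T\equiv 1$ then yields
\begin{equation*}
  U(N,\alpha,1) \;=\; -N\sum_v\log^+|\alpha|_v \;=\; -Nh(\alpha),
\end{equation*}
which is the second equality.

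There is really no main obstacle once Lemma \ref{LocalBounds} is in hand: the whole content of the theorem is that the additive correction $\log|T(\alpha)|_v$ is a local height contribution which dies globally by the product formula. The only points that require a moment of care are verifying that $T=1$ is a legal input to the lemma (so that $U_v(N,\alpha,1)$ has the claimed closed form) and checking that the global sum is finite, both of which are immediate from the hypotheses $T(\alpha)\ne 0$ and $\alpha\in K$.
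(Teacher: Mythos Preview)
Your proposal is correct and follows essentially the same approach as the paper's own proof: choose a number field $K$ containing $\alpha$ and the coefficients of $T$, sum the local identities of Lemma~\ref{LocalBounds} over all places of $K$, and invoke the product formula to kill $\sum_v\log|T(\alpha)|_v$. The extra care you take in checking that $T\equiv 1$ is a legitimate input and that the global sum is finite is good hygiene but not a departure from the paper's argument.
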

\begin{proof}
Assume that $K$ is a number field containing $\alpha$ and the coefficients of $T$ and $v$ is a place of $K$.  
We know that the absolute value $|\cdot|_v$ satisfies the product formula $\prod_v|\beta|_v = 1$
for all $\beta\in K^\times$.  Hence, summing the equation of Lemma \ref{LocalBounds} over all places
$v$ of $K$ we get that
\begin{equation} \label{FinalIneq}
  U(N,\alpha,T) = U(N,\alpha,1) = -Nh(\alpha)
\end{equation}
which establishes the theorem.
\end{proof}

\section{Polynomials near $x^n-1$} \label{x^n-1}

As we have remarked, Theorem \ref{GlobalBounds} naturally generalizes the
results of Dubickas and Mossinghoff in \cite{DubMoss}.  We will give a single result
that contains both their bound on the Mahler measure of a polynomial having coefficients congruent to 
$1 \mod m$ and their bound on the height of a totally $p$-adic algebraic unit.

Let us begin by reconstructing the situation of \cite{DubMoss}.  For an auxiliary polynomial 
$T\in\intg[x]$ and a positive integer $m$ define
\begin{equation} \label{IntegerOmega}
  \omega_m(T) = \log\gcd\left\{\frac{m^kT^{(k)}(1)}{k!}: 0\leq k\leq \deg T\right\}.
\end{equation}
Also assume that $f$ is a polynomial of degree $n-1$ with integer coefficients congruent to $1\mod m$.
The authors prove (Theorem 2.2 of \cite{DubMoss}) that if $g$ is a factor of $f$ over $\intg$
satisfying $\gcd(g(x),T(x^n)) = 1$ then
\begin{equation} \label{DubMossBound}
  \mu(g) \geq \frac{\omega_m(T) - \nu_\infty(T)}{\deg T}\left(\frac{\deg g}{n}\right).
\end{equation}
Later they prove (Theorem 4.2 of \cite{DubMoss}) that if $\alpha$ is a totally $p$-adic algebraic
unit then
\begin{equation} \label{DubMossPAdic}
  h(\alpha) \geq \frac{\omega_p(T) - \nu_\infty(T)}{(p-1)\deg T}.
\end{equation}
Our goal is to produce a generalization of \eqref{DubMossBound} where $T$ and $f$ are allowed to have algebraic
coefficients.   Our version also contains \eqref{DubMossPAdic} as a corollary.

Before we begin, we make one final trivial remark regarding the hypotheses of \cite{DubMoss}.
The assumption that $f$ have degree $n-1$ and coefficients congruent to $1\mod m$ is equivalent to
the assumption that $(x-1)f(x) \equiv x^n-1\mod m$.  Therefore, 
we can make a slightly stronger conclusion by hypthesizing instead that $f(x)\equiv x^n-1\mod m$
and bounding the Mahler measure of all factors $g$ of $f$.

We will require a version of $\omega_m(T)$ defined in \eqref{IntegerOmega} that allows $m$ to be a
general algebraic number and $T$ to have any algebraic coefficients.  If $K$ is a number field,
$m\in K$ and $T\in K[x]$ define
\begin{equation} \label{Omega}
  \omega_m(T) = -\sum_{v\nmid\infty}\log\max\left\{\left|\frac{m^kT^{(k)}(1)}{k!}\right|_v:
    0\leq k\leq \deg T \right\}
\end{equation}
where the sum is taken over places $v$ of $K$.
By the way we have normalized our absolute values, this definition does not depend on $K$.  
Moreover, if $m\in\intg$ and $T\in\intg[x]$ then \eqref{Omega} is the same as the definition 
\eqref{IntegerOmega}.

If $\alpha, \beta, m\in K$, then we write $\alpha\equiv\beta\mod m$ if $|\alpha - \beta|_v \leq |m|_v$ 
for all $v\nmid\infty$.   Similarly, if $f,g\in K[x]$ we write 
$f\equiv g\mod m$ if $\nu_v(f-g) \leq \log |m|_v$ for all $v\nmid\infty$.  
Neither defintion depends on $K$ and both generalize the usual notions of congruence in $\intg$.
If $T\in K[x]$ we often write $\nu_\infty(T) = \sum_{v\mid\infty}\nu_v(T)$ where $v$ runs over places of $K$.
This notation again does not depend on $K$.

It will also be convenient for this section and future applications to define
$U_v(\alpha,T) = U_v(\deg T,\alpha,T)$ and $U(\alpha,T) = U(\deg T,\alpha,T)$.

Using the definitions above, we obtain our generalized version of the results of \cite{DubMoss}.

\begin{thm} \label{DubMossGen}
  Let $m$ be an algebraic number.  Suppose that $f\in \alg[x]$ has degree $n$ and
  $f(x) \equiv x^n-1\mod m$.  If $\alpha$ is a root of $f$ and $T\in \alg[x]$ is such that 
  $T(\alpha^n)\ne 0$ then
  \begin{equation*}
    h(\alpha) \geq \frac{\omega_m(T) - \nu_\infty(T)}{n\deg T}.
  \end{equation*}
\end{thm}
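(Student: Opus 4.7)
The plan is to apply Theorem \ref{GlobalBounds} to the pair $(\alpha^n, T)$. Since $h(\alpha^n) = n h(\alpha)$, the theorem gives
\[
  U(\deg T, \alpha^n, T) \;=\; -n\deg T \cdot h(\alpha),
\]
so the statement will follow as soon as we prove the upper bound $U(\alpha^n, T) \leq \nu_\infty(T) - \omega_m(T)$. To prove this upper bound, I will estimate the local quantities $U_v(\deg T, \alpha^n, T)$ place-by-place, using the explicit identity
\[
  U_v(\deg T,\alpha^n,T) \;=\; \log|T(\alpha^n)|_v - \deg T\cdot \log^+|\alpha^n|_v
\]
supplied by Lemma \ref{LocalBounds}. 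The desired global inequality then drops out by summing over $v$ and invoking the definitions of $\nu_\infty(T)$ and $\omega_m(T)$.

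For archimedean $v$, the bound is immediate: the maximum-principle inequality \eqref{MaxPrinciple} (applied with $F = T$ and $N = \deg T$) yields $\log|T(\alpha^n)|_v \leq \nu_v(T) + \deg T\cdot\log^+|\alpha^n|_v$, and hence $U_v(\deg T,\alpha^n,T) \leq \nu_v(T)$. The real work is at the non-archimedean places, and this is where I expect the main obstacle: one must use the congruence $f \equiv x^n-1 \bmod m$ together with $f(\alpha)=0$ to control $\alpha^n - 1$ at such places. The key observation is that, setting $h(x) = (x^n-1) - f(x)$, the congruence gives $\nu_v(h) \leq \log|m|_v$ for every finite $v$, while $h(\alpha) = \alpha^n - 1$ because $f(\alpha)=0$. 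Applying \eqref{MaxPrinciple} to $h$ therefore produces the crucial estimate
\[
  |\alpha^n - 1|_v \;\leq\; |m|_v\cdot \max(1,|\alpha|_v)^n.
\]

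With this in hand, I expand $T$ in a Taylor series about $1$:
\[
  T(\alpha^n) \;=\; \sum_{k=0}^{\deg T} \frac{T^{(k)}(1)}{k!}\,(\alpha^n - 1)^k.
\]
The strong triangle inequality at a non-archimedean $v$, combined with the bound above, gives
\[
  |T(\alpha^n)|_v \;\leq\; \max_{0\leq k\leq \deg T}\left|\frac{m^k T^{(k)}(1)}{k!}\right|_v \cdot \max(1,|\alpha|_v)^{n\deg T},
\]
so that $U_v(\deg T,\alpha^n,T) \leq \log\max_k\{|m^k T^{(k)}(1)/k!|_v\}$. Summing this over the finite places together with the archimedean bound yields exactly $U(\alpha^n,T) \leq \nu_\infty(T) - \omega_m(T)$, and combining with the equality from Theorem \ref{GlobalBounds} produces the stated lower bound on $h(\alpha)$.
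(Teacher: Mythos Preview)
Your proof is correct and follows essentially the same strategy as the paper's: Taylor-expand $T$ at $1$, use the congruence $f\equiv x^n-1\bmod m$ to control the finite places, bound the archimedean places by $\nu_\infty(T)$, and invoke Theorem~\ref{GlobalBounds}. The only difference is packaging. The paper applies Theorem~\ref{GlobalBounds} to the pair $(\alpha,\,T(x^n))$ and bounds each non-archimedean $U_v(\alpha,T(x^n))$ through the infimum definition, subtracting the explicit polynomial $\sum_k \frac{T^{(k)}(1)}{k!}(x^n-1-f(x))^k$; you instead apply Theorem~\ref{GlobalBounds} to $(\alpha^n,\,T)$, use $h(\alpha^n)=n\,h(\alpha)$, and bound $|T(\alpha^n)|_v$ directly via the closed formula of Lemma~\ref{LocalBounds}. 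Since that lemma gives $U_v(n\deg T,\alpha,T(x^n))=\log|T(\alpha^n)|_v-n\deg T\log^+|\alpha|_v=U_v(\deg T,\alpha^n,T)$, the two computations are literally the same quantity, and the resulting estimates coincide. Your route is arguably a little more transparent because it avoids manipulating infima of sup-norms and works purely with the evaluation $T(\alpha^n)$.
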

\begin{proof}
  Let $K$ be a number field containing $\alpha$ and the coefficients of $T$ and let $v$ index the places of $K$.
  Using Theorem \ref{GlobalBounds} with $N=\deg T$ and the definition of $U_v$ we have that
  \begin{equation} \label{InitialUpperBound}
    -n\deg T\cdot h(\alpha) \leq \sum_{v\nmid\infty}U_v(\alpha, T(x^n)) + \nu_\infty(T)
  \end{equation}
  so we must show that $\sum_{v\nmid\infty}U_v(\alpha, T(x^n)) \leq - \omega_m(T)$.  Let $v\nmid\infty$.
  Writing $T$ in its Taylor expansion at $1$ and using the binomial theorem we find that
  \begin{align*}
    U_v(\alpha, T(x^n)) & = U_v\left(\alpha, \sum_{k=0}^{\deg T}\frac{T^{(k)}(1)}{k!}(x^n-1)^k\right) \\
    & \leq \nu_v\left( \sum_{k=0}^{\deg T}\frac{T^{(k)}(1)}{k!}(x^n-1-f(x))^k\right).
  \end{align*}
  Then using the strong triangle inequality for $\nu_v$ we obtain
  \begin{equation*}
    U_v(\alpha, T(x^n)) \leq \max\left\{\log\left|\frac{T^{(k)}(1)}{k!}\right|_v+ k\nu_v(x^n-1-f(x)): 
      0\leq k\leq \deg T\right\}.
  \end{equation*}
  Since $f(x)\equiv x^n-1\mod m$ we have that $\nu_v(x^n-1-f(x)) \leq \log|m|_v$.
  Consequently, we obtain that
  \begin{equation*}
    \sum_{v\nmid\infty}U_v(\alpha, T(x^n)) \leq 
    \sum_{v\nmid\infty}\log\max\left\{\left|\frac{m^kT^{(k)}(1)}{k!}\right|_v: 
      0\leq k\leq \deg T\right\} = -\omega_m(T)
  \end{equation*}
  and the theorem follows from \eqref{InitialUpperBound}.
\end{proof}

If we assume that $f$ and $T$ have integer coefficients and $m$ is a positive integer then we recover 
Theorem 2.2 of \cite{DubMoss}.

\begin{cor} \label{DubMoss}
  Let $f\in\intg[x]$ have degree $n$ and $f(x)\equiv x^n-1\mod m$.  If $g$ is a factor of $f$
  and $T\in\intg[x]$ is such that $\gcd(g(x),T(x^n)) = 1$ then
  \begin{equation*}
    \mu(g) \geq \frac{\omega_m(T) - \nu_\infty(T)}{\deg T}\left(\frac{\deg g}{n}\right).
  \end{equation*}
\end{cor}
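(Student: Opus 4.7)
The plan is to deduce Corollary \ref{DubMoss} as a direct consequence of Theorem \ref{DubMossGen} applied to each root of $g$ individually, then summed. First I would write $\alpha_1,\ldots,\alpha_{\deg g}$ for the roots of $g$ in $\alg$, recalling that $\mu(g) = \sum_i h(\alpha_i)$ by the definition of the logarithmic Mahler measure. Since $g$ divides $f$ in $\intg[x]$, each $\alpha_i$ is a root of $f$, so the hypothesis $f(x) \equiv x^n - 1 \mod m$ puts us in the setting of Theorem \ref{DubMossGen}.

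Next I would verify the side hypothesis $T(\alpha_i^n) \ne 0$. This is exactly where the assumption $\gcd(g(x),T(x^n))=1$ is used: if $T(\alpha_i^n)=0$ for some $i$, then $\alpha_i$ is a common root in $\alg$ of $g(x)$ and $T(x^n)$, contradicting their coprimality over $\intg$ (and hence over $\alg$).

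I would then check that the generalized definitions of $\omega_m(T)$ and $\nu_\infty(T)$ from \eqref{Omega} specialize correctly when $m\in\intg$ and $T\in\intg[x]$. For $\nu_\infty(T)$, there is only one archimedean place of $\rat$ (the usual absolute value), so $\nu_\infty(T) = \log\sup\{|T(z)| : z\in\com,\ |z|=1\}$, matching the quantity used in \cite{DubMoss}. For $\omega_m(T)$, every $m^k T^{(k)}(1)/k!$ lies in $\intg$, and the identity
\begin{equation*}
-\sum_{v\nmid\infty}\log\max_k\left|\tfrac{m^kT^{(k)}(1)}{k!}\right|_v
= \log\gcd\left\{\tfrac{m^kT^{(k)}(1)}{k!}:0\le k\le\deg T\right\}
\end{equation*}
follows from the fact that for a finite set of integers, the sum of the negative non-archimedean logarithmic maxima equals the log of the gcd. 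Thus \eqref{Omega} reduces to \eqref{IntegerOmega}.

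Finally I would apply Theorem \ref{DubMossGen} to each root $\alpha_i$, obtaining $h(\alpha_i) \geq (\omega_m(T)-\nu_\infty(T))/(n\deg T)$, and sum over $i=1,\ldots,\deg g$ to conclude $\mu(g) \geq (\omega_m(T)-\nu_\infty(T))\deg g/(n\deg T)$. There is no serious obstacle here; since Theorem \ref{DubMossGen} is already available, the only non-routine step is the compatibility check between the two formulations of $\omega_m(T)$, which is a short product-formula argument.
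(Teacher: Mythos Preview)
Your proposal is correct and follows exactly the same approach as the paper: the paper's proof is simply ``Apply Theorem \ref{DubMossGen} to each root $\alpha$ of $g$ and the result follows.'' You have merely spelled out the routine verifications (the gcd hypothesis forcing $T(\alpha_i^n)\ne 0$, and the compatibility of \eqref{Omega} with \eqref{IntegerOmega}) that the paper leaves implicit or states just before the corollary.
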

\begin{proof}
  Apply Theorem \ref{DubMossGen} to each root $\alpha$ of $g$ and the result follows.
\end{proof}

We also recover Theorem 4.2 of \cite{DubMoss} giving a lower bound on the height of a totally
$p$-adic algebraic unit.

\begin{cor} \label{PAdicDubMoss}
  If $\alpha$ is a totally $p$-adic algebraic unit and $T\in\intg[x]$ is such that
  $T(\alpha^{p-1}) \ne 0$ then
  \begin{equation*}
    h(\alpha) \geq \frac{\omega_p(T) - \nu_\infty(T)}{(p-1)\deg T}.
  \end{equation*}
\end{cor}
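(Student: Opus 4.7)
The plan is to deduce this corollary directly from Theorem \ref{DubMossGen}, taking $m=p$ and $n=p-1$. To set up that application I need a polynomial $f\in\alg[x]$ of degree $p-1$ with $f(\alpha)=0$ and $f(x)\equiv x^{p-1}-1\mod p$. The natural candidate is
\begin{equation*}
  f(x) = x^{p-1} - \alpha^{p-1},
\end{equation*}
which obviously has $\alpha$ as a root and the correct degree. Since $f(x)-(x^{p-1}-1) = 1-\alpha^{p-1}$ is a constant, verifying $f\equiv x^{p-1}-1\mod p$ reduces to showing $|1-\alpha^{p-1}|_v\leq |p|_v$ for every finite place $v$ of a number field $K$ containing $\alpha$.

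The verification splits into two cases. For $v\nmid p$, the unit hypothesis gives $|\alpha|_v=1$, so $|1-\alpha^{p-1}|_v\leq 1 = |p|_v$. For $v\mid p$, the totally $p$-adic hypothesis means $K_v=\rat_p$, and since $\alpha$ is a unit, its image in $K_v$ lies in $\intg_p^\times$. Fermat's little theorem then gives $\alpha^{p-1}\equiv 1\mod p\intg_p$, which translates to $\|1-\alpha^{p-1}\|_v\leq \|p\|_v$; raising both sides to the positive power $[K_v:\rat_p]/[K:\rat]$ preserves the inequality, so $|1-\alpha^{p-1}|_v\leq |p|_v$ as required.

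With this polynomial in hand, Theorem \ref{DubMossGen} applies: the hypothesis $T(\alpha^{p-1})\ne 0$ is our standing assumption, and $T\in\intg[x]\subset\alg[x]$. The conclusion of Theorem \ref{DubMossGen} with $n=p-1$, $m=p$ is exactly
\begin{equation*}
  h(\alpha) \geq \frac{\omega_p(T) - \nu_\infty(T)}{(p-1)\deg T},
\end{equation*}
which is the desired bound. No genuine obstacle arises; the only step that requires care is the congruence verification at places above $p$, where one must correctly interpret the totally $p$-adic condition together with $p$-adic integrality of $\alpha$ to invoke Fermat's little theorem.
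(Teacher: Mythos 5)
Your proof is correct and takes essentially the same route as the paper: both deduce the corollary from Theorem \ref{DubMossGen} with $m=p$ and $f(x)=x^{p-1}-\alpha^{p-1}$, and both verify $f\equiv x^{p-1}-1\mod p$ by checking the inequality $|1-\alpha^{p-1}|_v\leq|p|_v$ at every finite place. The paper phrases the key step at $v\mid p$ in terms of the uniformizer and residue degree ($f_v=d_v=1$), whereas you invoke Fermat's little theorem in $\intg_p^\times$ after observing $K_v=\rat_p$; these are the same observation in different clothing.
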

\begin{proof}
  For a general number field $K$ and a non-Archimedean place $v$ of $K$ dividing the place $p$ of $\rat$, 
  let $O_v = \{x\in K_v:|x|_v\leq 1\}$ denote the ring of $v$-adic integers in $K_v$ and let $\pi_v$ 
  be a generator of its unique maximal ideal $M_v= \{x\in K_v:|x|_v< 1\}$.  Let 
  $d_v = [K_v:\rat_p]$ denote the local degree and $d = [K:\rat]$ the global degree.
  We also define the residue degree $f_v$ by $p^{f_v} = |O_v/M_v|$ and note that $|\pi_v|_v = \|p\|_v^{f_v/d}$.
  If $K$ is a totally $p$-adic field then we have that $f_v = d_v = 1$ for all $v\mid p$.

  Now assume that $K$ is the totally $p$-adic field $\rat(\alpha)$.  If $v$ is a place of $K$ dividing $p$ then
  \begin{equation*}
    |\alpha^{p-1}-1|_v \leq |\pi_v|_v = \|p\|_v^{f_v/d} = \|p\|_v^{d_v/d} = |p|_v
  \end{equation*}
  and if $v$ does not divide $p$ or $\infty$ then
  \begin{equation*}
    |\alpha^{p-1}-1|_v \leq 1 = |p|_v.
  \end{equation*}
  Hence we have that $x^{p-1}-1 \equiv x^{p-1} - \alpha^{p-1}\mod p$.  
  Now we may apply Theorem \ref{DubMossGen} with $m=p$ and $f(x) = x^{p-1} - \alpha^{p-1}$ and the result follows.
\end{proof}

\section{Polynomials near $(x^n-1)^r$} \label{(x^n-1)^r}

In this section, we apply Theorem \ref{GlobalBounds} in 
order to examine the Mahler measure of any factor of a polynomial $f$ satsifying 
$f(x) \equiv (x^n-1)^r \mod m$.  In particular, we obtain the following explicit lower bound.

\begin{thm} \label{NotDistinctCyclos}
  Suppose that $f\in\intg [x]$ has degree $nr$, $m\geq 2$ is an integer, and $f(x)\equiv (x^n-1)^r\mod m$.  
  If $g$ is a factor of $f$ over $\intg$ having no cyclotomic factors then
  \begin{equation*} \label{NotDistinctCyclosInequality}
    \mu(g) \geq c\cdot\left(\frac{\deg g}{n2^r}\right)
  \end{equation*}
  where $c$ is the unique positive real number satisfying 
  $\displaystyle ce^{c/2}\log 3 = \log(3/2)\log 2$.\linebreak {\rm (Note that $c=.22823\ldots$)}.
\end{thm}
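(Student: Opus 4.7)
The plan is to adapt the argument behind Theorem~\ref{DubMossGen} to the modified congruence $f(x)\equiv(x^n-1)^r\pmod{m}$. Let $\alpha$ be any root of $g$ (hence of $f$), and let $q\in\intg[x]$ satisfy $(x^n-1)^r-f(x)=m\,q(x)$, so that $f(\alpha)=0$ gives $(\alpha^n-1)^r=m\,q(\alpha)$. For an auxiliary polynomial $T\in\intg[y]$ of degree $d$, I would write its Taylor expansion about $y=1$ as $T(y)=\sum_{k=0}^{d}a_k(y-1)^k$ with $a_k=T^{(k)}(1)/k!$ and invoke Theorem~\ref{GlobalBounds} on the polynomial $T(x^n)$ with $N=nd$.

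At each non-archimedean place $v$, the key new ingredient is the identity
\[
(\alpha^n-1)^k=\bigl(m\,q(\alpha)\bigr)^{\lfloor k/r\rfloor}(\alpha^n-1)^{k-r\lfloor k/r\rfloor},
\]
which forces
\[
A_v(x):=\sum_{k=0}^{d}a_k\bigl(m\,q(x)\bigr)^{\lfloor k/r\rfloor}(x^n-1)^{k-r\lfloor k/r\rfloor}\in\intg[x]
\]
to have degree at most $nd$ and satisfy $A_v(\alpha)=T(\alpha^n)$. Hence $F_v(x):=T(x^n)-A_v(x)$ vanishes at $\alpha$ and is admissible in the infimum defining $U_v(nd,\alpha,T(x^n))$. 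Using $\nu_v(q)\le 0$ and $\nu_v(x^n-1)=0$ at non-archimedean $v$, the strong triangle inequality yields
\[
U_v(nd,\alpha,T(x^n))\le\log\max_{0\le k\le d}\bigl|m^{\lfloor k/r\rfloor}a_k\bigr|_v.
\]
At archimedean places the trivial choice $F_v=0$ gives $U_v\le\nu_v(T(x^n))=\nu_v(T)$. Summing the local bounds via Theorem~\ref{GlobalBounds} produces
\[
h(\alpha)\ge\frac{\omega_m^{(r)}(T)-\nu_\infty(T)}{n\,\deg T},\qquad \omega_m^{(r)}(T):=\log\gcd\bigl\{m^{\lfloor k/r\rfloor}a_k:0\le k\le d\bigr\},
\]
and summing over the roots of $g$ --- legitimate once $T$ is chosen so that $T(\alpha^n)\ne 0$ for every such $\alpha$, which is possible because $g$ has no cyclotomic factor --- gives an inequality of the same shape for $\mu(g)$.

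The remaining step, and the principal obstacle, is to exhibit an explicit pair $(m,T)$ with $(\omega_m^{(r)}(T)-\nu_\infty(T))/\deg T\ge c/2^r$. The factor $2^r$ in the denominator strongly suggests $\deg T\asymp 2^r$, so that the gcd saving (forcing $a_k$ to be divisible by a power of $m$ growing like $m^{\lfloor k/r\rfloor}$) can accumulate across many coefficients while balancing against the sup-norm of $T$ on the unit circle. The presence of $\log 2$ and $\log 3$ in the defining equation $c\,e^{c/2}\log 3=\log(3/2)\log 2$ --- recognisable as a Lambert-$W$ relation --- suggests that the extremum is realised with $m\in\{2,3\}$ and a one-parameter family of polynomials $T$ whose Taylor coefficients at $1$ decay geometrically, the continuous optimisation (balancing the exponent of $m$ in the gcd against the growth of $\nu_\infty(T)$) producing $c$ as its optimal value. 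Verifying that the extremising $T$ can be taken in $\intg[y]$ (by rounding or by an explicit construction) should complete the proof.
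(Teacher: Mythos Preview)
Your general inequality
\[
h(\alpha)\ \ge\ \frac{\omega_m^{(r)}(T)-\nu_\infty(T)}{n\deg T},
\qquad
\omega_m^{(r)}(T)=\log\gcd\Bigl\{m^{\lfloor k/r\rfloor}\,\tfrac{T^{(k)}(1)}{k!}:0\le k\le\deg T\Bigr\},
\]
is correct and is a perfectly good starting point; with $T(y)=(y-1)^r$ it already reproduces the paper's first basic bound $\mu(g)\ge\log(m/2^r)\,\deg g/(nr)$. The gap is everything after that. You do not finish the argument, and the direction you sketch for finishing it is wrong on two counts.

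First, $m$ is \emph{given}; you cannot ``exhibit a pair $(m,T)$'' by choosing $m\in\{2,3\}$. The constant $c$ must work for every $m\ge 2$, and the binding case is not $m=2$ or $3$ but $m=p$ a large odd prime just below $2^{r+c_0}$. The appearance of $\log 3$ and $\log(3/2)$ in the defining equation for $c$ comes from the trivial estimate $1-\log 2/\log p\ge \log(3/2)/\log 3$ for odd primes $p$, not from optimising over $m$.

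Second, the winning auxiliary polynomial is not a one-parameter family with geometrically decaying Taylor coefficients of degree $\asymp 2^r$; it is simply $T(y)=y^{q}-1$ (and $T(y)=y^{2q}-1$ when $2\mid m$), where $q=p^{\lceil\log_p r\rceil}$ for a prime $p\mid m$. The point you are missing is the freshman's-dream identity $(y-1)^{q}\equiv y^{q}-1\pmod p$, which in your language says that every intermediate Taylor coefficient $\binom{q}{k}$ is divisible by $p$, so $\omega_p^{(r)}(y^{q}-1)\ge\log p$ while $\nu_\infty(y^{q}-1)=\log 2$. Equivalently, the paper replaces $f$ by $(x^n-1)^{q-r}f(x)\equiv x^{nq}-1\pmod p$ and reduces to the $r=1$ situation. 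This is what makes the bound survive when $m\le 2^r$, where your $(y-1)^d$-type polynomials give nothing. The paper then runs a three-case split (large $m$; small $m$ with $2\mid m$; small $m$ with $2\nmid m$), and $c$ is the value at which the first and third cases meet; no Lambert-$W$ optimisation over a continuous family is involved.
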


As an application, let $T$ be a product of cyclotomic polynomials of degree $2N$.  
Then we may apply Theorem \ref{NotDistinctCyclos} with $g(x) = T(x) + mx^N$ where $|m|\geq 2$.  
In this situation, $r$ is the maximum multiplicity of the cyclotomic polynomials in the factorization of 
$T$ over $\intg$.  These types of polynomials have been studied extensively (see, for example, \cite{MPV})
and our results yield a lower bound on any such $g$, although it is not absolute for this
entire class of polynomials.   

Of course, Theorem \ref{NotDistinctCyclos} is not helpful when 
$g$ is a product of cyclotomic polynomials with the middle coefficient shifted by only $1$.
Numerical evidence presented in \cite{MPV} suggests that these polynomials form a relatively rich 
collection of polynomials of small Mahler measure.  Hence it would be useful to have a method for
giving lower bound on their Mahler measure.  However, we are unable to do so in this paper.

We also note that Theorem \ref{NotDistinctCyclos} is weaker than Corollaries 3.3 and 3.4 of \cite{BDM}
when $r=1$.  In this situation, we may appeal to \cite{DubMoss} or the results section \ref{x^n-1} to obtain
the sharpest known bounds.

The proof of Theorem \ref{NotDistinctCyclos} will require 3 lemmas as well as some additional notation.
Suppose that $g$ and $T$ are polynomials over any field $K$.  $K[x]$ is certainly a unique factorization
domain so we may write $\mult_g(T)$ to denote the mulitplicity of $g$ in the
factorization of $T$.  If $G$ is a collection of polynomials over $K$, then let 
$\mult_G(T) = \sum_{g\in G}\mult_g(T)$.  

Our first lemma is a direct generalization of Theorem 3.3 of \cite{BDM}.

\begin{lem} \label{Cyclos}
  Suppose that $f\in\intg [x]$ has degree $nr$ and $f(x)\equiv (x^n-1)^r\mod m$. If $g$ is a 
  factor of $f$ over $\intg$ and $T\in\rat [x]$ is relatively prime to $g$ then
  \begin{equation} \label{CyclosEquation1}
    \mu(g) \geq \frac{\ord_{x^n-1}(T)\log m-r\nu_\infty(T)}{r\deg T}\cdot\deg g.
  \end{equation}
  Moreover, if $2|m$ then
  \begin{equation} \label{CyclosEquation2}
    \mu(g) \geq \frac{\ord_{x^n-1}(T)\log m + \ord_{G_{n}}(T)\log 2 - r\nu_\infty(T)}{r\deg T}
    \cdot\deg g
  \end{equation}
  where $G_n = \{x^{n2^j}+1:j\geq 0\}$.
\end{lem}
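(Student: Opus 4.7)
\noindent\textit{Proof plan.}
Following the strategy used in the proof of Theorem \ref{DubMossGen}, I would apply Theorem \ref{GlobalBounds} with $N=\deg T$ to each root $\alpha$ of $g$ and sum the resulting identities over the multiset of roots of $g$, obtaining
\[
-\deg T\cdot \mu(g) \;=\; \sum_{g(\alpha)=0}\sum_v U_v(\deg T,\alpha,T).
\]
The archimedean contribution is handled trivially by taking $f\equiv 0$ in the infimum defining $U_v$: this gives $\sum_{v\mid\infty}U_v(\alpha,T)\leq \nu_\infty(T)$ for each root, and hence at most $\deg g\cdot\nu_\infty(T)$ in total.

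For the non-archimedean terms in \eqref{CyclosEquation1}, rather than approximating $T$ by a polynomial vanishing at $\alpha$, I would use Lemma \ref{LocalBounds} in closed form: $U_v(\alpha,T)=\log|T(\alpha)|_v-\deg T\cdot\log^+|\alpha|_v$. Factor $T(x)=(x^n-1)^\ell S(x)$ with $\ell=\ord_{x^n-1}(T)$ and $\deg S=\deg T-n\ell$; since $T\in\intg[x]$ and $(x^n-1)^\ell$ is monic, polynomial division gives $S\in\intg[x]$. Set $Q(x):=\bigl((x^n-1)^r-f(x)\bigr)/m$; the congruence $f\equiv(x^n-1)^r\bmod m$ forces $Q\in\intg[x]$ with $\deg Q\leq nr$, and evaluating at $\alpha$ gives $(\alpha^n-1)^r=-m\,Q(\alpha)$, so
\[
\log|\alpha^n-1|_v \;=\; \tfrac{1}{r}\bigl(\log|m|_v+\log|Q(\alpha)|_v\bigr).
\]
Substituting into the closed form for $U_v$ and applying the max-principle \eqref{MaxPrinciple} to both $Q(\alpha)$ and $S(\alpha)$ yields
\[
U_v(\alpha,T)\;\leq\;\frac{\ell}{r}\log|m|_v+\frac{\ell}{r}\nu_v(Q)+\nu_v(S)+\Bigl[\tfrac{\ell\deg Q}{r}+\deg S-\deg T\Bigr]\log^+|\alpha|_v.
\]
The bracketed coefficient simplifies to $\ell(\deg Q/r - n)\leq 0$, so the last term may be discarded. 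Summing over the $\deg g$ roots of $g$ and over all non-archimedean $v$, then invoking the product formula $\sum_{v\nmid\infty}\log|m|_v=-\log m$ together with $\sum_{v\nmid\infty}\nu_v(Q)\leq 0$ and $\sum_{v\nmid\infty}\nu_v(S)\leq 0$, bounds the non-archimedean contribution by $-\ell\deg g\log m/r$; combining with the archimedean estimate and rearranging produces \eqref{CyclosEquation1}.

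For \eqref{CyclosEquation2} with $2\mid m$, I would refine the factorization to $T(x)=(x^n-1)^\ell\prod_{j\geq 0}(x^{n2^j}+1)^{\ell_j}S'(x)$, so $\log|T(\alpha)|_v$ gains the additional terms $\sum_j\ell_j\log|\alpha^{n2^j}+1|_v$. At $v\nmid 2$ one has $\log|2|_v=0$ and the previous argument carries over verbatim. At $v\mid 2$ (hence $v\mid m$), I would combine the telescoping identity $\alpha^{n2^{j+1}}-1=(\alpha^{n2^j}-1)(\alpha^{n2^j}+1)$ with the binomial expansion of $(\alpha^n)^{2^j}-1$ (using Kummer's identity $v_2(\binom{2^j}{k})=j-v_2(k)$) and the relation $(\alpha^n-1)^r=-mQ(\alpha)$ to derive an inequality of the shape
\[
\sum_{j\geq 0}\ell_j\log|\alpha^{n2^j}+1|_v \;\leq\; \frac{\ord_{G_n}(T)\log|2|_v}{r},
\]
with any positive discrepancy absorbed into the max-principle terms for $Q$ and $S'$ (mirroring the treatment of the bracketed coefficient in the first part). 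Summing over $v\mid 2$ and using the product formula $\sum_{v\mid 2}\log|2|_v=-\log 2$ supplies the extra $\ord_{G_n}(T)\log 2/r$ term in the numerator.

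The main technical difficulty lies in this second equation at places $v\mid 2$: when $r>1$, the hypothesis $(\alpha^n-1)^r\equiv 0\bmod m$ only yields $|\alpha^n-1|_v\leq|2|_v^{1/r}$, which can strictly exceed $|2|_v$, so the bound $\log|\alpha^{n2^j}+1|_v\leq\log|2|_v/r$ does not follow from the single ultrametric estimate $|\alpha^{n2^j}+1|_v\leq\max(|\alpha^{n2^j}-1|_v,|2|_v)$; it must instead be extracted by carefully tracking how the $r$-th root structure on $\alpha^n-1$ interacts with the iterated squarings that produce the factors $\alpha^{n2^j}+1$.
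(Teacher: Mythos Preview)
Your argument for \eqref{CyclosEquation1} is correct, though packaged differently from the paper. The paper raises $T$ to the $r$-th power and uses the submultiplicativity $U_v(\alpha,F_1F_2)\le U_v(\alpha,F_1)+U_v(\alpha,F_2)$ to split off the factor $(x^n-1)^{r\ell}$, then bounds $U_v(\alpha,(x^n-1)^r)$ by $\nu_v((x^n-1)^r-f)$ directly from the infimum definition. You instead stay with $T$ and use the closed form from Lemma~\ref{LocalBounds}, replacing the abstract infimum by an explicit evaluation $(\alpha^n-1)^r=mQ(\alpha)$ and a max-principle bound on $Q$. The bookkeeping of the $\log^+|\alpha|_v$ coefficient is a nice touch and shows clearly why the degrees match up; the two arguments are equivalent once unwound. (Minor point: you write $T\in\mathbb Z[x]$ to conclude $S\in\mathbb Z[x]$, but the hypothesis allows $T\in\mathbb Q[x]$; the paper has the same looseness.)

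For \eqref{CyclosEquation2} there is a real gap: you name the obstacle but do not clear it, and the tool you reach for (binomial expansion with Kummer's valuation of $\binom{2^j}{k}$) is not the right one. The paper's device is purely polynomial and avoids any pointwise estimate of $|\alpha^{n2^j}+1|_v$. Since $2\mid m$, over $\mathbb F_2$ one has
\[
(x^{n2^j}+1)^r \equiv (x^{n2^j}-1)^r \equiv \bigl((x^n-1)^{2^j}\bigr)^r \equiv \bigl((x^n-1)^r\bigr)^{2^j} \equiv f(x)^{2^j},
\]
so with $b_j=f^{2^j-1}$ the polynomial $(x^{n2^j}+1)^r-f b_j$ has all coefficients even. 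Because $f b_j$ vanishes at $\alpha$ and has the correct degree, this gives $U_v(\alpha,(x^{n2^j}+1)^r)\le \nu_v\bigl((x^{n2^j}+1)^r-fb_j\bigr)$ at every non-archimedean $v$, and summing yields the $\log 2$ contribution with no case analysis on $|\alpha|_v$. Your pointwise route \emph{can} be salvaged, but it needs an ingredient you do not mention: since $f$ has odd leading and constant coefficients its Newton polygon at any $v\mid 2$ is flat, so $|\alpha|_v=1$; then the telescoping $\alpha^{n2^{j+1}}-1=(\alpha^{n2^j}-1)(\alpha^{n2^j}+1)$ together with $|\alpha^n-1|_v\le|2|_v^{1/r}$ and $|\alpha^{n2^j}+1|_v\le\max(|\alpha^{n2^j}-1|_v,|2|_v)$ gives $|\alpha^{n2^j}+1|_v\le|2|_v^{1/r}$ by a one-line induction. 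No Kummer is needed, and without the observation $|\alpha|_v=1$ your absorbed-discrepancy idea fails (the coefficient of $\log^+|\alpha|_v$ cannot be made to compensate for the negative $\log|2|_v$ term).
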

\begin{proof}
  Suppose that $\alpha$ is a root of $f$, $K$ is a number field containing $\alpha$ and $v$
  indexes the places of $K$.  First observe that if $F_1,F_2\in \Omega_v[x]$ then 
  $\nu_v(F_1F_2)\leq \nu_v(F_1) + \nu_v(F_2)$.  This yields the multiplicativity relation
  \begin{equation} \label{USubMult}
    U_v(\alpha,F_1F_2)\leq U_v(\alpha,F_1) + U_v(\alpha,F_2).
  \end{equation}
    
  Theorem \ref{GlobalBounds} implies that
  \begin{equation} \label{Initial}
    -r\deg T\cdot h(\alpha) \leq \sum_{v\nmid\infty}U_v(\alpha,T^r) + r\nu_\infty(T).
  \end{equation}
  Suppose that that $T_0\in\intg[x]$ is such that $T(x)^r = (x^n-1)^{r\ord_{x^n-1}(T)}T_0(x)$.
  We know that since $T_0$ has integer coefficients, $U_v(\alpha,T_0) \leq \nu_v(T_0) \leq 0$.
  Then \eqref{USubMult} implies that
  \begin{align*}
    U_v(\alpha,T^r)  & \leq \ord_{x^n-1}(T)U_v(\alpha, (x^n-1)^r) \\
    & \leq \ord_{x^n-1}(T) \nu_v((x^n-1)^r -f(x)).
  \end{align*}
  Since $f$ has integer coefficients and satisfies $f(x)\equiv (x^n-1)^r\mod m$ we know that
  $\sum_{v\nmid\infty}\nu_v((x^n-1)^r -f(x)) \leq -\log m$.  It follows that
  \begin{equation} \label{AlmostFinal}
    -r\deg T\cdot h(\alpha) \leq -\ord_{x^n-1}(T)\log m + r\nu_\infty(T).
  \end{equation}
  Applying \eqref{AlmostFinal} to each root $\alpha$ of $g$, we obtain \eqref{CyclosEquation1}.

  Next, assume that $2|m$.  In this situation, write
  \begin{equation*}
    T(x)^r = T_0(x)(x^n-1)^{r\ord_{x^n-1}(T)}\prod_{j\geq 0}(x^{n2^j}+1)^{r\ord_{x^{n2^j}+1}(T)}
  \end{equation*}
  for some $T_0\in\intg[x]$. In addition to the congruence $f(x)\equiv (x^n-1)^r\mod m$,
  for each $j\geq 0$ there exists $b_j\in\intg[x]$ such that $f(x)b_j(x) \equiv (x^{n2^j}+1)^r \mod 2$.  
  Hence, it follows that
  \begin{equation*}
    \sum_{v\nmid\infty}\nu_v(x^{n2^j}+1 -f(x)b_j(x)) \leq -\log 2
  \end{equation*}
  for all $j\geq 0$.  Now we find that
  \begin{equation*}
    U_v(\alpha,T^r) \leq \ord_{x^n-1}(T) \nu_v((x^n-1)^r -f(x))
    + \sum_{j\geq 0} \ord_{x^{n2^j}+1}(T) \nu_v(x^{n2^j}+1 -f(x)b_j(x))
  \end{equation*}
  for all $v\nmid\infty$.  Therefore, \eqref{Initial} yields
  \begin{equation*}
    -r\deg T\cdot h(\alpha) \leq -\ord_{x^n-1}(T)\log m - \ord_{G_{n}}(T)\log 2 +r\nu_\infty(T)
  \end{equation*}
  and the result follows by a similar argument as above.
\end{proof}

Note that the right hand sides of the inequalities of Lemma \ref{Cyclos} are less than
$0$ when $r$ is too large compared to $m$.  Hence, it may appear that these bounds are useful
only when $r$ is small.  However, a simple consequence of Lemma \ref{Cyclos} allows us to give
non-trivial lower bounds when $r$ is large.

\begin{lem} \label{Cyclos2}
  Let $p$ be prime and $q$ a power of $p$ such that $\deg f = nq$ and \linebreak
  $f(x)\equiv (x^n-1)^q\mod p$.  If $g$ is a factor of $f$ over $\intg$ and $T\in\rat [x]$
  is such that $\gcd(T(x^q),g(x))=1$ then
  \begin{equation} \label{CyclosEquation3}
    \mu(g) \geq \frac{\ord_{x^{n}-1}(T)\log p - \nu_\infty(T)}{q\deg T}\cdot\deg g.
  \end{equation}
  Moreover, if $p=2$ then
  \begin{equation} \label{CyclosEquation4}
    \mu(g) \geq \frac{(\ord_{x^{n}-1}(T) + \ord_{G_{n}}(T))\log 2 - \nu_\infty(T)}{q\deg T}\cdot\deg g
  \end{equation}
  where $G_n = \{x^{n2^j}+1:j\geq 0\}$.
\end{lem}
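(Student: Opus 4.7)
The plan is to reduce Lemma \ref{Cyclos2} directly to Lemma \ref{Cyclos} by exploiting the fact that $q$ is a power of $p$. The key observation is that in characteristic $p$, Frobenius gives $(x^n-1)^q = x^{nq}-1$, so the hypothesis $f(x)\equiv (x^n-1)^q\bmod p$ is equivalent to $f(x)\equiv x^{nq}-1\bmod p$. In other words, $f$ satisfies the hypothesis of Lemma \ref{Cyclos} with the substitutions $n\mapsto nq$, $r\mapsto 1$, $m\mapsto p$. The natural auxiliary polynomial to feed into Lemma \ref{Cyclos} is then not $T$ itself but $T(x^q)$, since we expect the gcd hypothesis $\gcd(T(x^q),g(x))=1$ to match up exactly with the coprimality hypothesis of Lemma \ref{Cyclos}.

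Having set up the substitution, the next step is to translate each of the quantities appearing on the right hand side of \eqref{CyclosEquation1} (applied with auxiliary polynomial $T(x^q)$) back into quantities involving $T$. First, $\deg T(x^q)=q\deg T$ is immediate. Second, for any Archimedean place $v$ the map $z\mapsto z^q$ is a surjection of the unit circle onto itself, so $\nu_v(T(x^q))=\nu_v(T)$ and hence $\nu_\infty(T(x^q))=\nu_\infty(T)$. Third, and most importantly, if $T(x)=(x^n-1)^sS(x)$ with $s=\ord_{x^n-1}(T)$, then $T(x^q)=(x^{nq}-1)^sS(x^q)$, so $\ord_{x^{nq}-1}(T(x^q))\geq \ord_{x^n-1}(T)$. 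Plugging these three identities/inequalities into the bound obtained from \eqref{CyclosEquation1} yields \eqref{CyclosEquation3} after dividing by $q\deg T$.

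For the $p=2$ case I would instead apply \eqref{CyclosEquation2} (with the same substitutions, which are permitted since $2\mid p = m$) and handle the additional term $\ord_{G_{nq}}(T(x^q))\log 2$ in the same spirit. Writing $q=2^k$, the set $G_{nq}=\{x^{nq\cdot 2^j}+1:j\geq 0\}$ equals $\{x^{n\cdot 2^i}+1:i\geq k\}\subseteq G_n$. For each $i\geq k$, if $(x^{n\cdot 2^{i-k}}+1)^t$ divides $T(x)$ then $(x^{n\cdot 2^i}+1)^t=((x^{n\cdot 2^{i-k}})^q+1)^t$ divides $T(x^q)$, so $\ord_{x^{n\cdot 2^i}+1}(T(x^q))\geq \ord_{x^{n\cdot 2^{i-k}}+1}(T)$. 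Summing over $i\geq k$ (equivalently $j\geq 0$) gives $\ord_{G_{nq}}(T(x^q))\geq \ord_{G_n}(T)$, and combining with the previous paragraph's estimates yields \eqref{CyclosEquation4}.

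I do not expect any real obstacle; the entire content is the Frobenius trick together with bookkeeping on how the operation $T(x)\mapsto T(x^q)$ interacts with the archimedean sup norm and the multiplicities of $x^n-1$ and the factors in $G_n$. The only point worth double-checking is the multiplicity inequality $\ord_{x^{nq}-1}(T(x^q))\geq \ord_{x^n-1}(T)$, which, as above, is immediate from factoring out $(x^n-1)^s$ from $T$ and substituting $x^q$ for $x$.
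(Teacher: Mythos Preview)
Your proposal is correct and follows essentially the same route as the paper: use the Frobenius identity $(x^n-1)^q\equiv x^{nq}-1\pmod p$ to place yourself in the $r=1$ case of Lemma~\ref{Cyclos} with auxiliary polynomial $T(x^q)$, then translate $\deg$, $\nu_\infty$, $\ord_{x^{nq}-1}$ and (for $p=2$) $\ord_{G_{nq}}$ back to the corresponding quantities for $T$. The paper even writes $\ord_{x^{nq}-1}(T(x^q))=\ord_{x^n-1}(T)$ as an equality, while you only use the inequality $\geq$; either suffices for the stated bound, and your more detailed treatment of the $G_n$ term simply fills in what the paper leaves as ``a similar argument.''
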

\begin{proof}
  We know that $f(x)\equiv (x^n-1)^q\equiv x^{nq}-1\mod p$.  Therefore, we may apply Lemma \ref{Cyclos}
  with $m=p$, $r=1$ and $T(x^q)$ in place of $T(x)$.  We obtain that
  \begin{align*}
    \mu(g) & \geq \frac{\ord_{x^{nq}-1}(T(x^q))\log p - \nu_\infty(T(x^q))}{q\deg T}\cdot\deg g \\
    & = \frac{\ord_{x^{n}-1}(T)\log p - \nu_\infty(T)}{q\deg T}\cdot\deg g.
  \end{align*}
  Inequality \eqref{CyclosEquation4} follows from a similar argument.
\end{proof}

In the hypotheses of Lemma \ref{Cyclos} we are given $f(x)\equiv (x^n-1)^r\mod m$,
so we may also apply Lemma \ref{Cyclos2} with $p$ a prime dividing $m$ and $q=p^{\lceil\log_pr\rceil}$.
We know that $(x^n-1)^{q-r}f(x) \equiv (x^n-1)^q \mod p$ so that Lemma \ref{Cyclos2} still
applies to any factor $g$ of $f$.

As we have noted, this method allows us to deduce non-trivial lower bounds
on the Mahler measure even when $r$ is large.  There is the disadvantage that $q$ is potentially 
much larger than $r$, making the inequalities of Lemma \ref{Cyclos2} weaker than
those of Lemma \ref{Cyclos} in some cases.  Furthermore, if $m$ has many prime factors, $p$ 
will be significantly smaller than $m$, again making the inequalities of Lemma \ref{Cyclos2} 
weaker than those of Lemma \ref{Cyclos}.  

As a general rule, we will use Lemma \ref{Cyclos} 
when $r$ is small and Lemma \ref{Cyclos2} when $r$ is large to obtain the best universal results.
We see this strategy in the proof of our next lemma.

\begin{lem} \label{UniversalEffectiveBounds}
  Suppose that $f \in\intg [x]$ has degree $nr$ and  $f(x)\equiv (x^n-1)^r\mod m$.  
  If $g$ is a factor of $f$ over $\intg$ having no cyclotomic factors then
  \begin{equation} \label{Basic1}
    \mu(g) \geq \log\left(\frac{m}{2^r}\right)\left(\frac{\deg g}{nr}\right).
  \end{equation}
  If $p$ is a prime dividing $m$ then
  \begin{equation} \label{Basic2}
    \mu(g) \geq \frac{1}{p}\log\left(\frac{p}{2}\right)\left(\frac{\deg g}{nr}\right)
  \end{equation}
  and if $2$ divides $m$ then
  \begin{equation} \label{Basic3}
    \mu(g) \geq \frac{\log 2}{4}\left(\frac{\deg g}{nr}\right).
  \end{equation}
\end{lem}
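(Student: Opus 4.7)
The plan is to obtain each of the three inequalities by a direct application of either Lemma~\ref{Cyclos} or Lemma~\ref{Cyclos2} with a simple explicit choice of $T$, after noting that $g$ is automatically coprime to any product of cyclotomic polynomials.

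For \eqref{Basic1}, I would set $T(x)=x^n-1$ in \eqref{CyclosEquation1} of Lemma~\ref{Cyclos}. Since $T$ is a product of cyclotomic polynomials and $g$ has none, $\gcd(T,g)=1$. One has $\ord_{x^n-1}(T)=1$, $\deg T = n$, and $\nu_\infty(T)=\log 2$ (because $\sup_{|z|=1}|z^n-1|=2$), so the right hand side of \eqref{CyclosEquation1} simplifies immediately to $\log(m/2^r)\cdot(\deg g)/(nr)$.

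For \eqref{Basic2} and \eqref{Basic3} I would first reduce to the hypothesis of Lemma~\ref{Cyclos2} by the device from the remark after that lemma: pick a prime $p\mid m$, set $q=p^{\lceil\log_p r\rceil}$, and replace $f$ by $\tilde f(x)=(x^n-1)^{q-r}f(x)$. Then $\deg\tilde f=nq$, $\tilde f\equiv(x^n-1)^q\pmod p$, and $g$ is still a factor of $\tilde f$ with no cyclotomic factors. For \eqref{Basic2}, I take $T(x)=x^n-1$ in \eqref{CyclosEquation3}; as before $T(x^q)=x^{nq}-1$ is coprime to $g$, and \eqref{CyclosEquation3} yields $\mu(g)\geq\log(p/2)\cdot(\deg g)/(qn)$. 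For \eqref{Basic3}, with $p=2$ and $q=2^{\lceil\log_2 r\rceil}$, I take $T(x)=x^{2n}-1=(x^n-1)(x^n+1)$, so that $\ord_{x^n-1}(T)=1$, $\ord_{G_n}(T)=1$ (picking up $x^n+1$ from the $j=0$ factor), $\deg T=2n$ and $\nu_\infty(T)=\log 2$; then \eqref{CyclosEquation4} gives $\mu(g)\geq \log 2\cdot (\deg g)/(2qn)$.

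The only remaining step is the elementary bookkeeping observation $q\leq pr$ in the first case and $q\leq 2r$ in the second. This follows by writing $p^{k-1}<r\leq p^k$, so $q=p^k\leq pr$ (and analogously for $p=2$). Substituting gives the claimed $\tfrac{1}{p}\log(p/2)$ and $\tfrac{\log 2}{4}$ constants. I do not anticipate a genuine obstacle here: the work is entirely in making the right choice of $T$ and of the auxiliary exponent $q$, and the rest is arithmetic. If anything, the subtlety is simply remembering that Lemma~\ref{Cyclos2} requires $\deg\tilde f=nq$ exactly, which is why the $(x^n-1)^{q-r}$ factor is needed, and checking in the $p=2$ case of \eqref{Basic2} that the resulting trivial bound $\mu(g)\geq 0$ is still consistent with the statement.
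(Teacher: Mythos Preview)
Your proposal is correct and matches the paper's proof essentially line for line: the paper also takes $T(x)=x^n-1$ in \eqref{CyclosEquation1} for \eqref{Basic1}, passes to $\tilde f(x)=(x^n-1)^{q-r}f(x)$ with $q=p^{\lceil\log_p r\rceil}$ and applies \eqref{CyclosEquation3} with $T(x)=x^n-1$ for \eqref{Basic2}, and applies \eqref{CyclosEquation4} with $T(x)=x^{2n}-1$ and $q=2^{\lceil\log_2 r\rceil}$ for \eqref{Basic3}, finishing with the same estimate $q<pr$ (your $q\leq pr$ suffices). Your added remarks on why $\gcd(T(x^q),g)=1$ and on the degenerate $p=2$ case of \eqref{Basic2} are fine and do not depart from the paper's argument.
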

\begin{proof}
 To prove \eqref{Basic1}, we apply Lemma \ref{Cyclos} with $T(x) = x^n-1$ and the inequality
 follows immediately.

 To prove \eqref{Basic2}, we let $p$ be a prime dividing $m$ and set $q=p^{\lceil\log_pr\rceil}$.
 Therefore $q$ is an integer greater than or equal to $r$ so  that $(x^n-1)^{q-r}f(x) \equiv (x^n-1)^q\mod p$.  
 Using $T(x)=x^n-1$ with inequality \eqref{CyclosEquation3} of Lemma \ref{Cyclos2} we find that
 \begin{equation*}
   \mu(g) \geq \log\left(\frac{p}{2}\right)\left(\frac{\deg g}{nq}\right).
 \end{equation*}
 But we also know that $q = p^{\lceil\log_pr\rceil} < p^{1+\log_pr} = pr$ so that
 \begin{equation*}
   \mu(g) \geq \log\left(\frac{p}{2}\right)\left(\frac{\deg g}{npr}\right)
 \end{equation*}
 which is the desired inequality.

 Finally, to prove \eqref{Basic3}, suppose that $2\mid m$ and $q=2^{\lceil\log_2r\rceil}$.
 Use $T(x) = x^{2n}-1$ in inequality \eqref{CyclosEquation4} of Lemma \ref{Cyclos2}
 to obtain the desired result.
\end{proof}

\noindent {\it Proof of Theorem \ref{NotDistinctCyclos}.}
Let $c_0 = c/(2\log 2)$.  We distinguish the following 3 cases.
\renewcommand{\theenumi}{{\it \roman{enumi}}}
\begin{enumerate}
\item\label{LargeM'} $m\geq 2^{r+c_0}$,
\item\label{SmallM'} $m < 2^{r+c_0}$ and $2\mid m$,
\item\label{MediumM'} $m < 2^{r+c_0}$ and $2\nmid m$.
\end{enumerate}
If $m\geq 2^{r+c_0}$ then we use inequality \eqref{Basic1} of Lemma \ref{UniversalEffectiveBounds} to find that
\begin{equation*} \label{LargeM}
  \mu(g) \geq c_0\log 2 \left(\frac{\deg g}{nr}\right)
  \geq 2c_0\log 2\left(\frac{\deg g}{n2^r}\right) = c\cdot\left(\frac{\deg g}{n2^r}\right).
\end{equation*}
If $m<2^{r+c_0}$ and $2\mid m$ then inequality \eqref{Basic3} implies that
\begin{equation*} \label{P=2}
  \mu(g) \geq \frac{\log 2}{4}\left(\frac{\deg g}{nr}\right) \geq 
  \frac{\log 2}{2}\left(\frac{\deg g}{n2^r}\right) \geq c\cdot\left(\frac{\deg g}{n2^r}\right).
\end{equation*}
If $m<2^{r+c_0}$ and $p\ne 2$ is a prime dividing $m$ then
we apply inequality \eqref{Basic2} to find that
\begin{align*}
  \mu(g) & \geq \frac{1}{p}\log\left(\frac{p}{2}\right)\left(\frac{\deg g}{nr}\right) \\
  & \geq \left(1-\frac{\log 2}{\log p}\right)\left(\frac{\log p}{p}\right)
  \left(\frac{\deg g}{nr}\right) \\
  & \geq \left(\frac{\log (3/2)}{\log 3}\right)\left(\frac{\log p}{p}\right)
  \left(\frac{\deg g}{nr}\right).
\end{align*}
However, the function $(\log x)/x$ is decreasing for $x\geq e$.  Since $p\leq m < 2^{r+c_0}$,
we conclude that
\begin{equation*}
  \frac{\log p}{p} > \frac{(r+c_0)\log 2}{2^{r+c_0}}
  > \frac{r\log 2}{2^{r+c_0}},
\end{equation*}
and hence,
\begin{equation*} \label{Pne2}
  \mu(g) \geq  \left(\frac{\log (3/2)\log 2}{2^{c_0}\log 3}\right)
  \left(\frac{\deg g}{n2^r}\right).
\end{equation*}
We know that $2^{c_0} = e^{c/2}$ so that by our definition of $c$ we obtain
\begin{equation*}
   \mu(g) \geq c\cdot\left(\frac{\deg g}{n2^r}\right)
\end{equation*}
which establishes the theorem in the final case.\qed

\section{Polynomials near polynomials of low Archimedean supremum norm} \label{LowSup}

Suppose that $m$ is a non-zero algebraic number.  We now examine the situation where $f$ and $T$ are polynomials 
over $\alg$ of the same degree with $f\equiv T\mod m$.  If $K$ is a number field containing $m$ with
$v$ indexing the places of $K$, let
\begin{equation*} \label{mNorm}
  N(m) = \sum_{v\mid\infty}\log|m|_v = -\sum_{v\nmid\infty}\log|m|_v.
\end{equation*}
Note that this definition does not depend on $K$ and the second equality follows from the
product formula.  Recall that we write $\nu_\infty(T) = \sum_{v\mid\infty}\nu_v(T)$ and we say that
$f\equiv T\mod m$ if $\nu_v(T-f) \leq \log |m|_v$ for all $v\nmid\infty$.

\begin{thm} \label{LowSupNorm}
  Suppose that $f$ and $T$ are polynomials over $\alg$ of the same degree such that $f\equiv T\mod m$.  
  If $\alpha$ satisfies $f(\alpha)=0$ and $T(\alpha)\ne 0$ then
  \begin{equation*}
    \deg T\cdot h(\alpha) \geq  N(m) - \nu_\infty(T).
  \end{equation*}
\end{thm}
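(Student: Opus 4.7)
The plan is to apply Theorem \ref{GlobalBounds} with $N=\deg T$ and bound $U(N,\alpha,T)$ from above by splitting the sum over places of a suitable number field $K$ (containing $\alpha$, $m$, and the coefficients of $f$ and $T$) into its Archimedean and non-Archimedean parts. By Theorem \ref{GlobalBounds} we have the exact identity
\begin{equation*}
  -\deg T\cdot h(\alpha)\;=\;U(N,\alpha,T)\;=\;\sum_{v\mid\infty}U_v(N,\alpha,T)\;+\;\sum_{v\nmid\infty}U_v(N,\alpha,T),
\end{equation*}
so it suffices to produce place-by-place upper bounds on $U_v(N,\alpha,T)$ whose sum is at most $\nu_\infty(T)-N(m)$.

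At each non-Archimedean place $v$, the hypothesis that $f$ and $T$ have the same degree $N$ and that $f(\alpha)=0$ makes $f$ an admissible choice in the infimum defining $U_v(N,\alpha,T)$. Thus $U_v(N,\alpha,T)\le \nu_v(T-f)$, and the congruence $f\equiv T\bmod m$ gives $\nu_v(T-f)\le \log|m|_v$. Summing these inequalities over all finite places and using the definition of $N(m)$ (together with the product formula, as already noted in the paper) produces
\begin{equation*}
  \sum_{v\nmid\infty}U_v(N,\alpha,T)\;\le\;\sum_{v\nmid\infty}\log|m|_v\;=\;-N(m).
\end{equation*}

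At each Archimedean place $v$, I would not use $f$ at all — the congruence only gives information at finite places. Instead, I would use Lemma \ref{LocalBounds}, which gives $U_v(N,\alpha,T)=\log|T(\alpha)|_v-N\log^+|\alpha|_v$, and then apply inequality \eqref{MaxPrinciple} (which is proved inside Lemma \ref{LocalBounds}) to $T$ itself to conclude $U_v(N,\alpha,T)\le \nu_v(T)$. Summing over Archimedean $v$ yields $\sum_{v\mid\infty}U_v(N,\alpha,T)\le \nu_\infty(T)$.

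Combining the two bounds with the identity from Theorem \ref{GlobalBounds} gives $-\deg T\cdot h(\alpha)\le \nu_\infty(T)-N(m)$, which rearranges to the claimed inequality. The only potential subtlety, and the one genuinely non-routine step, is recognizing that the Archimedean and non-Archimedean contributions must be handled by different mechanisms — the congruence at finite places, and the maximum-principle-type bound \eqref{MaxPrinciple} at infinite places — since trying to bound $\nu_v(T-f)$ uniformly at Archimedean $v$ would lose control. Once that split is made, everything follows directly from results already established.
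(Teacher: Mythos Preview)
Your proof is correct and follows essentially the same route as the paper's: split $U(N,\alpha,T)=-\deg T\cdot h(\alpha)$ into Archimedean and non-Archimedean parts, bound the non-Archimedean part by $\nu_v(T-f)\le\log|m|_v$ using the hypothesis $f\equiv T\bmod m$, and bound the Archimedean part by $\nu_\infty(T)$. The only difference is cosmetic: the paper obtains the Archimedean bound $U_v(N,\alpha,T)\le\nu_v(T)$ directly from the defining infimum (take $f\equiv 0$), whereas you derive the same inequality via Lemma~\ref{LocalBounds} and \eqref{MaxPrinciple}; both are fine, but the former is a bit more direct and is the move the paper also uses in the proof of Theorem~\ref{DubMossGen}.
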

\begin{proof}
  Let $K$ be a number field containing $\alpha$, $m$, the coefficients of $T$ and the coefficients of $f$.
  By Theorem \ref{GlobalBounds} we find that
  \begin{equation*}
    -\deg T\cdot h(\alpha) \leq \sum_{v\nmid\infty}U_v(\alpha,T) + \nu_\infty(T).
  \end{equation*}
  If $v\nmid\infty$ then $U_v(\alpha,T) \leq \nu_v(T-f) \leq \log |m|_v$ and the result follows.
\end{proof}

Clearly, in order for Theorem \ref{LowSupNorm} to yield a nontrivial lower bound, we must have 
that $N(m) > \nu_\infty(T)$, justifying the title of this section.
That is, if $f$ is sufficiently close to $T$ at enough non-Archimedean places of $K$, the
positive contribution from $N(m)$ will overcome the negative contribution
from $\nu_\infty(T)$.  We also note the special case of Theorem \ref{LowSupNorm} where $m\in\intg$
and $f,T\in\intg[x]$.

\begin{cor} \label{RatLowSupNorm}
  Suppose that $f$ and $T$ are polynomials over $\intg$ of the same degree and $m$ is a positive integer
  such that $f\equiv T\mod m$.  If $g$ is a factor of $f$ relatively prime to $T$ then
  \begin{equation*}
    \deg f\cdot\mu(g) \geq \deg g\cdot(\log m - \nu_\infty(T)).
  \end{equation*}
\end{cor}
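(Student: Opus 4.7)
The plan is to obtain the corollary as a direct specialization of Theorem \ref{LowSupNorm}, applied one root of $g$ at a time, followed by a straightforward sum.

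First I would verify that the hypotheses of Theorem \ref{LowSupNorm} are met at every root of $g$. Since $g$ divides $f$ in $\intg[x]$, each root $\alpha$ of $g$ (taken with multiplicity) is a root of $f$, hence $f(\alpha)=0$. Because $\gcd(g,T)=1$ over $\intg$ (equivalently over $\rat$), no root of $g$ is a root of $T$, so $T(\alpha)\ne 0$. The hypothesis $f \equiv T \mod m$ in the integer sense coincides with the $v$-adic condition $\nu_v(T-f) \le \log|m|_v$ at every non-Archimedean $v$, so Theorem \ref{LowSupNorm} delivers
\begin{equation*}
  \deg T \cdot h(\alpha) \ge N(m) - \nu_\infty(T)
\end{equation*}
for each such $\alpha$.

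Next I would identify $N(m)$ with $\log m$ in the integer setting. Since $N(m)$ is independent of the number field containing $m$, I may work in $K=\rat$, where there is a unique Archimedean place with $|m|_\infty = m$; this gives $N(m) = \log m$. I would then sum the displayed inequality over the $\deg g$ roots of $g$ (with multiplicity) and apply the definition $\mu(g)=\sum_\alpha h(\alpha)$ to obtain
\begin{equation*}
  \deg T \cdot \mu(g) \ge \deg g \cdot (\log m - \nu_\infty(T)).
\end{equation*}
The hypothesis $\deg f = \deg T$ then yields the stated bound.

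There is no real obstacle here, since all the substantive content is already packaged in Theorem \ref{LowSupNorm}; the only points requiring attention are the translation of the classical divisibility $f\equiv T\mod m$ into the local-congruence language used in the theorem, and the evaluation $N(m)=\log m$ forced by the product-formula normalization.
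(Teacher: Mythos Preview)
Your proposal is correct and follows exactly the approach of the paper, which simply says to apply Theorem~\ref{LowSupNorm} to each root $\alpha$ of $g$; you have merely spelled out the routine verifications (that $T(\alpha)\ne 0$, that $N(m)=\log m$, and that $\deg T=\deg f$) that the paper leaves implicit.
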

\begin{proof}
  Apply Theorem \ref{LowSupNorm} to each root $\alpha$ of $g$ and the corollary follows.
\end{proof}

\begin{cor} \label{RatLowSupNorm2}
  Suppose that $f$ and $T$ are polynomials over $\intg$ of the same degree and $m$ is a positive integer
  such that $f\equiv T\mod m$.  If $f$ is relatively prime to $T$ then
  \begin{equation*}
    \mu(f) \geq \log m - \nu_\infty(T).
  \end{equation*}
\end{cor}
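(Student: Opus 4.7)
The plan is to derive this as an immediate specialization of Corollary \ref{RatLowSupNorm}. That corollary already handles an arbitrary factor $g$ of $f$ that is relatively prime to $T$, with conclusion $\deg f\cdot\mu(g) \geq \deg g\cdot(\log m - \nu_\infty(T))$. The hypothesis of the present corollary says that $f$ itself is relatively prime to $T$, so one may legitimately take $g = f$ in that earlier statement.

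Concretely, I would first observe that the divisibility relation $f \mid f$ is trivial, so $f$ qualifies as a ``factor of $f$'' in the sense used in Corollary \ref{RatLowSupNorm}. Then I would substitute $g = f$ into its conclusion, obtaining
\begin{equation*}
  \deg f\cdot\mu(f) \geq \deg f\cdot(\log m - \nu_\infty(T)).
\end{equation*}
Since $f$ has the same degree as $T$, and $T$ is a nonzero polynomial (its degree equals that of $f$, which is positive in any nontrivial case), we have $\deg f > 0$, so I can divide both sides by $\deg f$ to conclude $\mu(f) \geq \log m - \nu_\infty(T)$.

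There is essentially no obstacle here; the only minor point worth checking is that the degenerate case $\deg f = 0$ is either vacuous or trivially handled (a constant $f$ coprime to a constant $T$ with $f \equiv T \pmod m$ forces $\log m \leq \nu_\infty(T)$ automatically, and $\mu(f) = \log|f| \geq 0$ covers it). Apart from that bookkeeping, the corollary is a one-line specialization of the preceding one.
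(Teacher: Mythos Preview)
Your proposal is correct and matches the paper's own proof exactly: apply Corollary~\ref{RatLowSupNorm} with $g=f$ and cancel the common factor $\deg f$. The paper dispatches it in a single sentence and does not bother with the $\deg f = 0$ case, but your extra remark on that degenerate situation is harmless.
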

\begin{proof}
  Apply Corollary \ref{RatLowSupNorm} with $g=f$ and the result is immediate.
\end{proof}

\section{Acknowledgments}

The author wishes to thank J. Garza for noticing that the language of heights and places yields a 
more easily generalized proof of the results of \cite{BDM}.  We also thank F. Rodriguez-Villegas 
for remarking that the hypthesis that $f$ have coefficients congruent to $1$ mod $m$ may be replaced
by a hypothesis giving a more general congruence relation in $\intg$.
We thank M.J. Mossinghoff for his many useful suggestions, in particular, the inclusion of \cite{Breusch}
in the introduction.  Finally, we thank J.D. Vaaler for noticing that equality occurs in Theorem \ref{GlobalBounds}
for all auxiliary polynomials $T$ along with many other ideas.

\end{document}